\theoremstyle{plain}
\newtheorem{thm}{Theorem}[section]
\newtheorem*{thm*}{Theorem}
\newtheorem*{cor*}{Corollary}
\newtheorem{prop}[thm]{Proposition}
\newtheorem{cor}[thm]{Corollary}
\newtheorem*{claim*}{Claim}
\theoremstyle{definition}
\newtheorem{defn}[thm]{Definition}
\newtheorem{ex}[thm]{Example}
\newtheorem{prob}[thm]{Problem}
\newtheorem{setting}[thm]{Setting}
\theoremstyle{remark}
\numberwithin{equation}{thm}
\def\pd{\operatorname{pd}}
\def\Ext{\operatorname{Ext}}
\def\Ker{\operatorname{Ker}}
\def\Hom{\operatorname{Hom}}
\def\RHom{\mathrm{{\bf R}Hom}}
\def\rank{\mathrm{rank}}
\def\m{\mathfrak m}
\def\n{\mathfrak n}
\newcommand{\rma}{\mathrm{a}}
\newcommand{\rme}{\mathrm{e}}
\newcommand{\rmr}{\mathrm{r}}
\newcommand{\rmI}{\mathrm{I}}
\newcommand{\rmK}{\mathrm{K}}
\newcommand{\rmQ}{\mathrm{Q}}
\newcommand{\calR}{\mathcal{R}}
\newcommand{\calS}{\mathcal{S}}
\newcommand{\calT}{\mathcal{T}}
\newcommand{\calX}{\mathcal{X}}
\newcommand{\fka}{\mathfrak{a}}
\newcommand{\fkc}{\mathfrak{c}}
\newcommand{\fkm}{\mathfrak{m}}
\newcommand{\fkq}{\mathfrak{q}}
\newcommand{\mapright}[1]{%
\smash{\mathop{%
\hbox to 1cm{\rightarrowfill}}\limits^{#1}}}
\newcommand{\mapleft}[1]{%
\smash{\mathop{%
\hbox to 1cm{\leftarrowfill}}\limits_{#1}}}
\def\gr{\mbox{\rm gr}}
\title{Topics on $2$-almost Gorenstein rings}
\author{Shiro Goto}
\address{Department of Mathematics, School of Science and Technology, Meiji University, 1-1-1 Higashi-mita, Tama-ku, Kawasaki 214-8571, Japan}
\email{shirogoto@gmail.com}
\author{Naoki Taniguchi}
\address{Department of Mathematics, School of Science and Technology, Meiji University, 1-1-1 Higashi-mita, Tama-ku, Kawasaki 214-8571, Japan}
\email{taniguchi@meiji.ac.jp}
\urladdr{http://www.isc.meiji.ac.jp/~taniguchi/}
\thanks{2010 {\em Mathematics Subject Classification.} 13H10, 13H15, 13A30.}
\thanks{{\em Key words and phrases.} Cohen-Macaulay ring, Gorenstein ring, almost Gorenstein ring, 2-almost Gorenstein ring, canonical ideal, Ulrich ideal} 
\thanks{The first author was partially supported by JSPS Grant-in-Aid for Scientific Research (C) 16K05112. The second author was partially supported by JSPS Grant-in-Aid for Young Scientists (B) 17K14176.}
\begin{document}
\maketitle

\setlength{\baselineskip}{14.5pt}

\begin{abstract}
The notion of $2$-almost Gorenstein ring is a generalization of the notion of almost Gorenstein ring in terms of Sally modules of canonical ideals. In this paper, we deal with two different topics related to $2$-almost Gorenstein rings. The purposes are to determine all the Ulrich ideals in $2$-almost Gorenstein rings and to clarify the structure of minimal free resolutions of $2$-almost Gorenstein rings. 
\end{abstract}



\section{Introduction}

The goal of the series of researches \cite{CGKM, GGHV, GMP, GMTY1, GMTY2, GMTY3, GMTY4, GRTT, GTT, GTT2, T} is to find a new class of Cohen-Macaulay rings which is a natural generalization of Gorenstein rings in terms of homological algebra. Almost Gorenstein rings are one of the candidates for such a class of rings. Historically, the notion of almost Gorenstein ring in our sense originated from the work \cite{BF} of V. Barucci and R. Fr\"oberg in 1997, where they dealt with the notion for one-dimensional analytically unramified local rings. In \cite{GMP}, the first author, N. Matsuoka, and T. T. Phuong extended the notion to one-dimensional Cohen-Macaulay local rings, using the first Hilbert coefficients of canonical ideals. Furthermore, T. D. M. Chau, the first author, S. Kumashiro, and N. Matsuoka recently defined the notion of $2$-almost Gorenstein ring as a generalization of almost Gorenstein rings of dimension one.

To explain our motivation more precisely, let us review on the definition of $2$-almost Gorenstein rings. Throughout this paper, let $(R, \m)$ be a Cohen-Macaulay local ring with $\dim R=1$, possessing the canonical module $\rmK_R$. Let $I$ be a canonical ideal of $R$, that is, $I$ is an ideal of $R$, $I \neq R$, and $I \cong \rmK_R$ as an $R$-module. Assume that $I$ contains a parameter ideal $Q = (a)$ of $R$ as a reduction. We set $K = \frac{I}{a} = \{\frac{x}{a} \mid x \in I\}$ in the total ring $\rmQ(R)$ of fractions of $R$ and let $S =R[K]$. Therefore, $K$ is a fractional ideal of $R$ such that $R \subseteq K \subseteq \overline{R}$ and $S$ is a module-finite extension of $R$, where $\overline{R}$ stands for the integral closure of $R$ in $\rmQ(R)$. We denote by $\fkc = R:S$ the conductor. Let $\calT= \calR(Q)=R[Qt]$ and $\calR = \calR(I)=R[It]$ be the Rees algebras of $Q$ and $I$, respectively, where $t$ denotes an indeterminate over $R$. We set 
$$
\calS_Q(I) = I\calR/I\calT
$$
and call it the Sally module of $I$ with respect to $Q$ (\cite{V1}). Let $\rme_i(I)~(i=0, 1)$ be the $i$-th Hilbert coefficients of $R$ with respect to $I$. We then have $$\rank~\calS_Q(I) = \rme_1(I) - \left[\rme_0(I) - \ell_R(R/I)\right]$$ (\cite[Proposition 2.2 (3)]{GNO}).

With this notation, T. D. M. Chau, the first author, S. Kumashiro, and N. Matsuoka introduced the following. Note that the invariant $\rank~\calS_Q(I)$ is independent of the choice of canonical ideals $I$ and their minimal reductions $Q$ (\cite[Theorem 2.5]{CGKM}).

\begin{defn}(\cite[Definition 1.3]{CGKM})\label{1.1}
We say that $R$ is {\it a $2$-almost Gorenstein ring}, if $\rank~\calS_Q(I) =2$, that is, $\rme_1(I) = \rme_0(I) - \ell_R(R/I) + 2$.
\end{defn}

\noindent
It is known by \cite{GMP} that $R$ is a non-Gorenstein but almost Gorenstein ring if and only if $\rank~\calS_Q(I) =1$, or equivalently, $\rme_1(I) = \rme_0(I) - \ell_R(R/I) + 1$. Therefore, the notion of $2$-almost Gorenstein ring is one of the natural generalizations of almost Gorenstein rings.

In the present article, for the further study of $2$-almost Gorenstein rings, we investigate two different topics related to $2$-almost Gorenstein rings. The first one is to study the Ulrich ideals in $2$-almost Gorenstein rings. Remember that the notion of Ulrich ideal/module dates back to the work \cite{GOTWY} of the first author, K. Ozeki, R. Takahashi, K.-i. Watanabe, and K.-i. Yoshida in 2014, where they introduced the notion as a generalization of maximally generated maximal Cohen-Macaulay modules (\cite{BHU}) and started the basic theory. Typically, the maximal ideal of a Cohen-Macaulay local ring of minimal multiplicity is an Ulrich ideal and the higher syzygy modules of Ulrich ideals are Ulrich modules. In \cite{GOTWY, GOTWY2}, they determined all the Ulrich ideals of a Gorenstein local ring of finite CM-representation type and dimension at most $2$, using the techniques in the representation theory of maximal Cohen-Macaulay modules. Moreover, in \cite{GTT2}, the authors and R. Takahashi studied the structure of the complex $\RHom_R(R/I, R)$ for an Ulrich ideal $I$ in a Cohen-Macaulay local ring $R$. They  proved also that in a one-dimensional non-Gorenstein almost Gorenstein ring $R$, the only possible Ulrich ideal is the maximal ideal $\m$ of $R$ (\cite[Theorem 2.14 (1)]{GTT2}). In Section 2 of the present article, we shall study the natural question of how many Ulrich ideals are contained in a given $2$-almost Gorenstein ring. Our main result is stated as follows, where $\calX_R$ denotes the set of Ulrich ideals of $R$. Note that the $R/\fkc$-freeness of $K/R$ plays an important role for the analysis of $2$-almost Gorenstein rings (see \cite[Section 5]{CGKM}).

\begin{thm}\label{1.2}
Suppose that $R$ is a $2$-almost Gorenstein ring of minimal multiplicity. Then the following assertions hold true.
\begin{enumerate}
\item[$(1)$] If $K/R$ is a free $R/\fkc$-module, then $\calX_R =\{\fkc, \m\}$.
\item[$(2)$] If $K/R$ is not a free $R/\fkc$-module, then $\calX_R =\{\m\}$.
\end{enumerate}
\end{thm}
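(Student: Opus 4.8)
The plan is to split $\calX_R$ into $\{\m\}$ and the set of Ulrich ideals properly contained in $\m$, to rephrase the latter in terms of module-finite birational overrings of $R$, and then to feed in the structure theory of $2$-almost Gorenstein rings of minimal multiplicity developed in \cite{CGKM}.

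First I would check that $\m\in\calX_R$ in both cases. Since $R$ has minimal multiplicity and is not a discrete valuation ring (being $2$-almost Gorenstein, $R$ is not Gorenstein), we have $\m^2=Q\m$ for a minimal reduction $Q=(a)$ of $\m$; then $\m/Q$ is annihilated by $\m$, hence a free $R/\m$-module, and $\m\neq Q$ because $R$ is not regular. So it remains to describe the $J\in\calX_R$ with $J\subsetneq\m$ and to decide whether $\fkc$ is one of them.

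For the first of these, fix a minimal reduction $(b)$ of such a $J$. Then $J^2=bJ$, so $B:=J/b=R[J/b]$ is a module-finite birational extension ring of $R$ with $R\subsetneq B\subseteq\overline{R}$ and $J=bB$, and the remaining part of the Ulrich condition says exactly that $B/R\cong J/bR$ is a free $R/bB$-module. I would then invoke the description of $2$-almost Gorenstein rings of minimal multiplicity from \cite{CGKM}: the explicit form of $K$, of $S=R[K]$ and of the conductor $\fkc=R:S$ (in particular that $\fkc$ is an ideal of $S$ with $\fkc\cdot(K/R)=0$), together with the lengths of $K/R$, $S/R$ and $R/\fkc$ forced by $\rank\calS_Q(I)=2$. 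Using these one shows that such a $B$ must lie in $S$, and then that the $R/bB$-freeness of $B/R$ forces $B=S$, hence $J=\fkc$; the elimination of the intermediate overrings is where I expect to use arguments in the spirit of \cite[Theorem 2.14]{GTT2}, analysing $\RHom_R(R/J,R)$, or equivalently the canonical module of the Artinian ring $R/J$. This yields $\calX_R\subseteq\{\fkc,\m\}$, and this step — ruling out birational overrings $B$ with $R\subsetneq B\subsetneq S$ (or $B$ incomparable with $S$) — is the main obstacle, requiring the fine structure of a $2$-almost Gorenstein ring of minimal multiplicity rather than merely its numerical invariants.

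Finally, to decide whether $\fkc\in\calX_R$: using that $\fkc$ is an ideal of $S$ and that $R$ has minimal multiplicity, one checks that $\fkc^2=b\fkc$ for a suitable parameter $b\in\fkc$, so the only remaining question is whether $\fkc/bR$ is a free $R/\fkc$-module. Multiplication by $b$ gives an exact sequence $0\to S/R\to\fkc/bR\to\fkc/bS\to0$ of $R/\fkc$-modules; since in the present setting $S/R$ and $\fkc/bS$ are built explicitly out of $K/R$, the $R/\fkc$-freeness of $\fkc/bR$ is equivalent to the $R/\fkc$-freeness of $K/R$. Hence $\fkc\in\calX_R$ precisely in case $(1)$ and $\fkc\notin\calX_R$ in case $(2)$, which together with the previous paragraph gives $\calX_R=\{\fkc,\m\}$ and $\calX_R=\{\m\}$, respectively.
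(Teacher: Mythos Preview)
Your outline diverges from the paper's argument at the crucial step, and that step is where the real content lies.

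The paper does \emph{not} pass to the birational overring $B=J/b$ and try to locate it among the rings between $R$ and $\overline R$. Instead it splits according to $\mu_R(I)$. First, using that a two-generated Ulrich ideal is totally reflexive (Example~\ref{2.2}), the paper shows in Proposition~\ref{2.3} that the existence of such an $I$ forces $K/R$ to be $R/\fkc$-free, and in Corollary~\ref{1.7} that if in addition $S$ is Gorenstein then $R$ cannot have minimal multiplicity. Under either hypothesis of the theorem this rules out $\mu_R(I)=2$. Second, once $\mu_R(I)\ge 3$, the paper invokes \cite[Corollary~2.13]{GTT2}, which gives directly that $(0):_R(K/R)\subseteq I$; since $(0):_R(K/R)=\fkc$ here and $\ell_R(R/\fkc)=2$, this pins $I$ down to $\fkc$ or $\m$. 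Finally Proposition~\ref{1.6} decides whether $\fkc$ itself is Ulrich.

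Your plan to show ``$B\subseteq S$ and then $B=S$'' is exactly the point where you would need an argument of this strength, and you do not supply one. There is no a priori reason a blown-up Ulrich ideal must land inside $S=R[K]$; the paper never claims this, and what it actually proves ($\fkc\subseteq I$) is the contrapositive statement on the ideal side, obtained by entirely different means depending on whether $\mu_R(I)=2$ or $\mu_R(I)\ge 3$. Gesturing at ``arguments in the spirit of \cite[Theorem~2.14]{GTT2}'' does not bridge this: those arguments are precisely Proposition~\ref{2.3}, Corollary~\ref{1.7}, and the cited corollary from \cite{GTT2}, and they go through $\mu_R(I)$ and total reflexivity rather than through the lattice of overrings.

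Two smaller points. Your claim that $\fkc^2=b\fkc$ follows from ``$\fkc$ is an ideal of $S$ and $R$ has minimal multiplicity'' is not quite right: what is needed is that $S$ is Gorenstein (so that $\fkc\cong\rmK_S$ is principal over $S$), and in case~(1) this is supplied by \cite[Corollary~5.3]{CGKM} together with \cite[Proposition~5.7(1)]{CGKM}. In case~(2) one has to allow for the possibility that $\fkc^2\ne b\fkc$, which already disqualifies $\fkc$. Your final paragraph, reducing the Ulrich property of $\fkc$ to the $R/\fkc$-freeness of $K/R$, is essentially the content of Proposition~\ref{1.6}; the paper runs that reduction via $\fkc/\fkc^2\cong S/\fkc$ and the decomposition $S/R\cong K/R\oplus R/\fkc$ from \cite[Proposition~3.3]{CGKM}, which is cleaner than your exact sequence but equivalent.
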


The other topic in this article is to give the necessary and sufficient condition for $R$ to be a $2$-almost Gorenstein ring in terms of their minimal free resolutions. Our results Theorem \ref{3.2} and Proposition \ref{3.4} have been strongly inspired by the previous work \cite[Theorem 7.8]{GTT} of the authors and R. Takahashi.

In what follows, unless otherwise specified, let $R$ be a Cohen-Macaulay local ring with maximal ideal $\fkm$. For each finitely generated $R$-module $M$, let $\mu_R(M)$ (resp. $\ell_R(M)$) denote the number of elements in a minimal system of generators of $M$ (resp. the length of $M$). We denote by $\mathrm{K}_R$ the canonical module of $R$.


\section{Ulrich ideals in $2$-almost Gorenstein rings}\label{section2}

Let $(R,\m)$ be a Cohen-Macaulay local ring with $\dim R=1$, possessing the canonical module $\rmK_R$. Let $I$ be an $\m$-primary ideal of $R$ and assume that $I$ contains a parameter ideal $Q=(a)$ of $R$ as a reduction.

\begin{defn}(\cite[Definition 1.1]{GOTWY})\label{2.1}
We say that $I$ is {\it an Ulrich ideal of $R$}, if 
\begin{enumerate}
\item $I \supsetneq Q$, $I^2=QI$, and
\item $I/I^2$ is a free $R/I$-module.
\end{enumerate}
\end{defn}

\noindent
Note that in Definition \ref{2.1}, Condition $(1)$ is equivalent to saying that the associated graded ring $\gr_I(R) = \bigoplus_{n\ge 0} I^n/I^{n+1}$ is a Cohen-Macaulay ring with $\rma(\gr_I(R))=0$, where $\rma(\gr_I(R))$ denotes the $\rma$-invariant of $\gr_I(R)$. Therefore,  Condition $(1)$ of Definition \ref{2.1} is independent of the choice of minimal reductions $Q$ of $I$. When $I=\m$, Condition $(2)$ is automatically satisfied and Condition $(1)$ is equivalent to saying that $R$ has minimal multiplicity which is not a regular local ring. Suppose  that $I^2=QI$ and consider the exact sequence
$$
0 \to Q/QI \to I/I^2 \to I/Q \to 0
$$
of $R$-modules. We then have that $I/I^2$ is a free $R/I$-module if and only if so is $I/Q$. When $\mu_R(I) =2$, the latter condition is equivalent to saying that $Q:I =I$, namely, $I$ is a good ideal in the sense of \cite{GIW} (see also \cite[Lemma 2.3, Corollary 2.6]{GOTWY}).

Let us begin with the following. See (\cite[Example 7.3]{GOTWY}) for the proof.

\begin{ex}[{\cite[Example 7.3]{GOTWY}}]\label{2.2}
Suppose that $I$ is an Ulrich ideal of $R$ such that $\mu_R(I)=2$.  Let us write $I=(a, b)$ with $b \in R$. Then $b^2 = ac$ for some $c \in I$, because $I^2=QI$. With this notation, the minimal free resolution of $I$ is given by 
$$
\Bbb F : \ \ \cdots \to R^2 \overset{
\begin{pmatrix}
-b & -c\\
a & b
\end{pmatrix}}{\longrightarrow}
R^2 \overset{
\begin{pmatrix}
-b & -c\\
a & b
\end{pmatrix}}{\longrightarrow}R^2 \overset{\begin{pmatrix}
a & b
\end{pmatrix}}{\longrightarrow} I \to 0,
$$
whence $\pd_R I = \infty$. By \cite[Theorem 2.8]{GTT2}, $I$ is a totally reflexive $R$-module, that is, $I$ is reflexive, $\Ext_R^p(I, R) =(0)$, and $\Ext_R^p(I^*, R) = (0)$ for every $p >0$, where $(-)^* = \Hom_R(-, R)$ denotes the $R$-dual functor.
\end{ex}

The purpose of this section is to explore the question of how many Ulrich ideals are contained in a given $2$-almost Gorenstein rings. To do this, let $K$ be a fractional ideal of $R$ such that $R \subseteq K \subseteq \overline{R}$ and $K \cong \rmK_R$ as an $R$-module, where $\overline{R}$ denotes the integral closure of $R$ in $\rmQ(R)$. We set $S =R[K]$ and $\fkc = R : S$. Then $\fkc = K : S$ (\cite[Lemma 3.5 (2)]{GMP}). It is proved by \cite[Theorem 1.4]{CGKM} that $R$ is a $2$-almost Gorenstein ring if and only if $\ell_R(R/\fkc)=2$. Therefore, if $R$ is a $2$-almost Gorenstein ring, there exists a minimal system $x_1, x_2, \ldots, x_n$ of generators of $\m$ such that $\fkc =(x_1^2) + (x_2, x_3, \ldots, x_n)$, where $n=\mu_R(\m)$ is the embedding dimension of $R$.  Let  $\rmr(R) =\ell_R(\Ext_R^1(R/\m, R))$ denote the Cohen-Macaulay type of $R$.

With this notation we have the following, which is the key in our argument.

\begin{prop}\label{2.3}
Suppose that $R$ is a $2$-almost Gorenstein ring. Let $I$ be an Ulrich ideal of $R$ such that $\mu_R(I)=2$. Then the following assertions hold true.
\begin{enumerate}
\item[$(1)$] $K/R$ is a free $R/\fkc$-module. 
\item[$(2)$] $\fkc = (x_2, x_3, \ldots, x_n)$.
\item[$(3)$] $I + \fkc = \m$.
\end{enumerate}
Hence, $\mu_R(\fkc) =n-1$ and $x_1^2 \in (x_2, x_3, \ldots, x_n)$.
\end{prop}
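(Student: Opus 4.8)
The plan is to exploit the structure theory of $2$-almost Gorenstein rings from \cite{CGKM}, together with the fact (Example \ref{2.2}) that an Ulrich ideal $I$ with $\mu_R(I)=2$ is a totally reflexive $R$-module, hence $\Ext_R^i(R/I, R)=(0)$ vanishes in the right degrees and the canonical dual of $I$ behaves well. First I would recall that, since $R$ is $2$-almost Gorenstein, $\ell_R(R/\fkc)=2$, so $\fkc$ sits between $\m^2$ and $\m$, and $R/\fkc$ is a local Artinian ring of length $2$. The key structural input to bring in is the behaviour of $K/R$ as an $R/\fkc$-module: by \cite[Section 5]{CGKM} (and the discussion preceding Theorem \ref{1.2}), $K/R$ is a finitely generated faithful $R/\fkc$-module, and the dichotomy in Theorem \ref{1.2} is precisely whether it is $R/\fkc$-free. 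Part (1) will follow once I show that the existence of an Ulrich ideal $I$ with $\mu_R(I)=2$ forces this freeness; the natural mechanism is to compute $\Hom_R(R/I, K)$ (equivalently $\Ext$-groups against the canonical module) using the periodic resolution $\Bbb F$ of $I$ from Example \ref{2.2}, and compare lengths.

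Concretely, I would proceed as follows. Write $I=(a,b)$ with $Q=(a)$ the minimal reduction, so $b^2=ac$ with $c\in I$, and use $\Bbb F$ to compute $\Ext_R^1(R/I,K)$ and the module $\Hom_R(R/I,K)\cong (0:_K I)$, or rather work with $R/I \otimes_R K$ and the canonical duality $\Hom_R(-,K)$ on the category of maximal Cohen-Macaulay modules. Since $I$ is totally reflexive and $K$ is the canonical module, $\Ext_R^i(I,K)=(0)$ for $i>0$, and the periodicity of $\Bbb F$ gives that $\Hom_R(R/I,K)$ and $R/I\otimes K$ are related by the same periodic matrix; a length count then shows that $R/I$ (equivalently $I/I^2 \cong (R/I)^2$) interacts with $\fkc$ in a controlled way. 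The cleanest route to (2) and (3): from $\ell_R(R/\fkc)=2$ we know $\fkc$ is one of the ``big'' ideals just below $\m$, and the standard presentation gives a minimal generating set $x_1,\dots,x_n$ of $\m$ with $\fkc=(x_1^2)+(x_2,\dots,x_n)$. To upgrade this to $\fkc=(x_2,\dots,x_n)$ I must rule out the possibility that $x_1^2\notin(x_2,\dots,x_n)$; here I would use that $R$ has minimal multiplicity, so $\m^2=Q'\m$ for a minimal reduction $Q'=(y)$ of $\m$, forcing strong relations among the $x_i^2$ and $x_ix_j$, and then combine this with the existence of the Ulrich ideal $I$ (whose square satisfies $I^2=QI$) to see that $I+\fkc$ cannot be a proper ideal strictly between $\fkc$ and $\m$ of the wrong shape — $\ell_R(\m/\fkc)=1$ leaves essentially no room, so $I+\fkc\in\{\fkc,\m\}$, and $I\not\subseteq\fkc$ because $\fkc$ is not Ulrich-friendly (e.g. $\mu_R(\fkc)=n-1\ne 2$ unless $n=3$, and even then one checks $\fkc$ fails $I^2=QI$ against minimal multiplicity), giving $I+\fkc=\m$ as in (3). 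Once (3) holds, $\m=I+\fkc$ with $\mu_R(I)=2$ and $\ell_R(\m/\fkc)=1$ forces, after adjusting generators, $x_1\in I$ and $x_2,\dots,x_n$ generating $\fkc$; then $x_1^2\in I^2=QI\subseteq \m\cdot(\text{stuff})$ lands in $(x_2,\dots,x_n)=\fkc$, which is the final assertion, and $\mu_R(\fkc)=n-1$ is immediate.

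The main obstacle I anticipate is part (1): proving that the mere existence of a $2$-generated Ulrich ideal forces $K/R$ to be $R/\fkc$-free. The length bookkeeping via the periodic resolution $\Bbb F$ is delicate because one must track how $\fkc=\Ann_R(K/R)$ (and hence $I\cdot K \subseteq$ something) interacts with the matrix $\left(\begin{smallmatrix}-b & -c\\ a & b\end{smallmatrix}\right)$; the cleanest formulation may be to show that $K/R$ is killed by $I$ (or that $I\subseteq\fkc$ is impossible so $IK=K$ forces $K/R$ to be a module over $R/(I+\fkc)=R/\m$ in the non-free case, contradicting something), and this requires care about which ideal annihilates which quotient. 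I would structure the argument so that (1) is deduced first and then used freely in (2) and (3); alternatively, if a direct proof of (1) is awkward, one can try to prove (3) first by a purely length-theoretic argument in the Artinian ring $R/\fkc$ (using minimal multiplicity to pin down $\m^2$ and the Ulrich condition $I^2=QI$), and then derive (1) and (2) as consequences, since $\fkc=(x_2,\dots,x_n)$ together with $I+\fkc=\m$ and $\mu_R(I)=2$ makes the $R/\fkc$-module structure of $K/R$ transparent via the explicit generators.
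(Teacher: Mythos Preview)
Your plan has the right raw material (total reflexivity of $I$, the periodic resolution $\Bbb F$, the structure of $K/R$ over $R/\fkc$), but several steps as written do not go through.

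\textbf{Minimal multiplicity is not a hypothesis.} Proposition~\ref{2.3} does \emph{not} assume $R$ has minimal multiplicity; that is only assumed later in Theorem~\ref{1.2}. You invoke it twice (to ``pin down $\m^2$'' and to rule out $I\subseteq\fkc$), so those parts of the argument are illegitimate here.

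\textbf{The argument for $I\not\subseteq\fkc$ is a non-sequitur, and circular.} You write that $I\not\subseteq\fkc$ ``because $\fkc$ is not Ulrich-friendly (e.g.\ $\mu_R(\fkc)=n-1\ne2$\dots)''. First, $I\subseteq\fkc$ would not force $I=\fkc$, so properties of $\fkc$ as an ideal are irrelevant. Second, $\mu_R(\fkc)=n-1$ is part of the \emph{conclusion} of the proposition, not something you may use. The paper's mechanism is different and clean: once (1) is known, $R/\fkc$ is a direct summand of $K/R$, hence $\Ext_R^p(I,R/\fkc)=(0)$ for all $p>0$. Applying $\Hom_R(-,R/\fkc)$ to $\Bbb F$ therefore gives an exact complex with differential $\left(\begin{smallmatrix}-\overline b&\overline a\\-\overline c&\overline b\end{smallmatrix}\right)$; if $I\subseteq\fkc$ this matrix would be zero and exactness would fail, so $I\not\subseteq\fkc$, and then $\ell_R(R/\fkc)=2$ gives $I+\fkc=\m$.

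\textbf{Part (1) needs the structure theorem, not a length count.} Your ``length bookkeeping'' is vague, and you yourself flag it as the main obstacle. The actual argument is short: by \cite[Proposition~3.3]{CGKM} one has $K/R\cong(R/\fkc)^{\oplus\ell}\oplus(R/\m)^{\oplus m}$ with $\ell>0$. If $m>0$ then $R/\m$ is a direct summand of $K/R$; since $I$ is totally reflexive and $K$ is canonical, $\Ext_R^p(I,K)=(0)$ for $p>0$, hence $\Ext_R^p(I,K/R)=(0)$, hence $\Ext_R^p(I,R/\m)=(0)$ for all $p>0$. This forces $\pd_RI<\infty$, contradicting Example~\ref{2.2}. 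So $m=0$.

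\textbf{Part (2) does not follow from (3) by ``adjusting generators''.} Knowing $I+\fkc=\m$ and $\mu_R(I)=2$ does not by itself give $x_1^2\in(x_2,\dots,x_n)$; you must actually use the Ulrich relation $b^2=ac$. The paper does a short case analysis in $R/\fkc$: since $\m/\fkc=(\overline{x_1})$ has length $1$ and $I+\fkc=\m$, at least one of $\overline a,\overline b$ is a unit multiple of $\overline{x_1}$. In either case, expanding $b^2=ac$ modulo $J=(x_2,\dots,x_n)$ yields $x_1^2\in J$, whence $\fkc=J$ and $\mu_R(\fkc)=n-1$. Your line ``$x_1^2\in I^2=QI\subseteq\m\cdot(\text{stuff})$ lands in $(x_2,\dots,x_n)$'' does not justify the last containment.
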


\begin{proof}
We have $K/R \cong (R/\fkc)^{\oplus \ell} \oplus (R/\m)^{\oplus m}$ as an $R/\fkc$-module for some $\ell >0$ and $m \ge 0$ such that $\ell +m = \rmr(R)+1$ (\cite[Proposition 3.3 (4)]{CGKM}). Let us assume that $m>0$. Then, since $I$ is totally reflexive (see Example \ref{2.2}) and $\Ext_R^p(I,K)= (0)$ for every $p > 0$, we get $\Ext_R^p(I, K/R)=(0)$, whence 
$$
\Ext_R^p(I, R/\m)=(0)
$$ for all $p>0$, because  $R/\m$ is a direct summand of $K/R$. This is impossible, since $I$ is not a free $R$-module.  Hence $m=0$ and Assertion $(1)$ follows.

Let us maintain the same notation (see Example \ref{2.2}). We then have a minimal free resolution   
$$
\Bbb F : \ \ \cdots \to R^2 \overset{
\begin{pmatrix}
-b & -c\\
a & b
\end{pmatrix}}{\longrightarrow}
R^2 \overset{
\begin{pmatrix}
-b & -c\\
a & b
\end{pmatrix}}{\longrightarrow}R^2 \overset{\begin{pmatrix}
a & b
\end{pmatrix}}{\longrightarrow} I \to 0
$$
of $I$. Let $\overline{x}$ denote, for each $x \in R$, the image of $x$ in $R/\fkc$. Then, by taking the $R/\fkc$-dual of the resolution $\Bbb F$, we get the exact sequence
$$ 
0 \to \Hom_R(I, R/\fkc) \to (R/\fkc)^{\oplus 2} \overset{
\begin{pmatrix}
-\overline{b} & \overline{a}\\
-\overline{c} & \overline{b}
\end{pmatrix}}{\longrightarrow}
(R/\fkc)^{\oplus 2} \overset{
\begin{pmatrix}
-\overline{b} & \overline{a}\\
-\overline{c} & \overline{b}
\end{pmatrix}}{\longrightarrow}
(R/\fkc)^{\oplus 2}
 \to \cdots 
$$
of $R$-modules, because $\Ext_R^p(I, R/\fkc) = (0)$ for all $p >0$ (remember that $R/\fkc$ is a direct summand of $K/R$). Hence $I \nsubseteq \fkc$. Therefore,  $I+\fkc =\m$, since $\ell_R(R/\fkc) = 2$. This proves Assertion $(3)$.

To see Assertion $(2)$, note that $\m/\fkc = (\overline{x_1}) = (\overline{a}, \overline{b})$ and $\ell_R(\m/\fkc)=1$. We set $J = (x_2, x_3, \ldots, x_n)$.

{\bf Case 1 ($\overline{a} \ne 0$).}  We write $a = \alpha x_1 + \xi$ for some $\alpha \in R$ and $\xi \in J$. We may assume $\alpha=1$ and $b \in J$, because $\alpha \notin \m$ and $\overline{b} \in \fkm/\fkc = (\overline{a})$. Hence 
$$
\begin{pmatrix}
-\overline{b} & \overline{a}\\
-\overline{c} & \overline{b}
\end{pmatrix}=
\begin{pmatrix}
0 & \overline{x_1}\\
-\overline{c} & 0
\end{pmatrix},
$$
so that $\overline{c} \ne 0$. Therefore, writing $c= \gamma x_1 + \rho$ with $\gamma \notin \m$ and $\rho \in J$, we get 
$$
\gamma x_1^2 \equiv ac = b^2 \equiv 0 \ \ \text{mod}\ \ J,
$$ 
whence $x_1^2 \in J$ and $\fkc =J$.

{\bf Case 2 ($\overline{a} = 0$).} We have $\m/\fkc = (\overline{b})$. We may assume $b = x_1 + \xi$ for some $\xi \in J$. Let $c= \alpha x_1 + \eta$ with $\alpha \in R$ and $\eta \in J$. Therefore, as $b^2 = ac$, we get 
$$
x_1^2 \equiv \beta x_1^2 {\cdot}\alpha x_1 \ \ \text{mod} \ \ J
$$
for some $\beta \in R$, where $a = \beta x_1^2 + \rho$ with $\beta \in R$ and $\rho \in J$. Hence $x_1^2 \in J$ and $\fkc = J$.
\end{proof}

\begin{cor}\label{1.7}
Suppose that $R$ is a $2$-almost Gorenstein ring and $S$ is a Gorenstein ring. If $I$ is an Ulrich ideal of $R$ with $\mu_R(I) =2$, then $\mu_R(\fkc) = n -1$ and $R$ doesn't have minimal multiplicity. 
\end{cor}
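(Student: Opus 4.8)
The equality $\mu_R(\fkc)=n-1$ is already contained in Proposition \ref{2.3}, so the new content is to use that $S$ is Gorenstein in order to rule out minimal multiplicity. The plan is to compare the length of $S/R$ with its minimal number of generators, viewing $S/R$ as a module over the length-two Artinian local ring $R/\fkc$; these two invariants will be forced to be incompatible once $R$ has minimal multiplicity.

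First I would assemble four ingredients. \textbf{(i)} Since $S$ is module-finite over $R$ and torsion-free, $S$ is a maximal Cohen-Macaulay $R$-module, and $\Hom_R(S,\rmK_R)$ is a canonical module of $S$; as $K\cong\rmK_R$ and $\Hom_R(S,K)\cong(K:S)=\fkc$ by \cite[Lemma 3.5 (2)]{GMP}, this canonical module of $S$ is isomorphic to $\fkc$ as an $S$-module. Hence, if $S$ is Gorenstein, $\fkc$ is a free $S$-module of rank one; writing $\fkc=\xi S$ with $\xi$ a nonzerodivisor we obtain $\fkc\cong S$ as $R$-modules, so $\mu_R(S)=\mu_R(\fkc)=n-1$ by Proposition \ref{2.3}. \textbf{(ii)} As $S$ is a finitely generated $R$-module, $S/\m S\neq 0$, hence $1\notin\m S\supseteq\m K$; substituting this into the exact sequences $0\to R\to K\to K/R\to 0$ and $0\to R\to S\to S/R\to 0$ gives $\mu_R(K/R)=\mu_R(\rmK_R)-1=\rmr(R)-1$ and $\mu_R(S/R)=\mu_R(S)-1$. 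Since $K/R$ is $R/\fkc$-free by Proposition \ref{2.3}(1), it follows that $K/R\cong(R/\fkc)^{\oplus(\rmr(R)-1)}$, so $\ell_R(K/R)=2(\rmr(R)-1)$ because $\ell_R(R/\fkc)=2$. \textbf{(iii)} Applying $\Hom_R(-,\rmK_R)$ to $0\to R\to S\to S/R\to 0$, and using $\Ext^p_R(S,\rmK_R)=0$ for $p>0$ together with $\Hom_R(S/R,\rmK_R)=0$, one identifies $\Ext^1_R(S/R,\rmK_R)\cong\rmK_R/\fkc$; local duality for the finite length module $S/R$ then yields $\ell_R(S/R)=\ell_R(\rmK_R/\fkc)=\ell_R(K/R)+\ell_R(R/\fkc)=2\,\rmr(R)$. \textbf{(iv)} By Proposition \ref{2.3} we have $\fkc=(x_2,\dots,x_n)$ and $x_1^2\in\fkc$, so $\m^2\subseteq\fkc$; thus $R/\fkc$ is a local ring of length two whose maximal ideal is principal and squares to zero, and therefore every finitely generated $R/\fkc$-module $M$ is isomorphic to $(R/\fkc)^{\oplus p}\oplus(R/\m)^{\oplus q}$ for some integers $p,q\ge 0$, whence $\ell_R(M)=2p+q$ and $\mu_R(M)=p+q$.

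Next I would derive the contradiction. Suppose $R$ has minimal multiplicity. Since $R$ is $2$-almost Gorenstein it is not Gorenstein, hence not a discrete valuation ring, and then $\rmr(R)=\rme(R)-1=\mu_R(\m)-1=n-1$, as is standard for minimal multiplicity in dimension one. By (i) and (ii), $\mu_R(S/R)=\mu_R(S)-1=n-2$; by (iii), $\ell_R(S/R)=2(n-1)$; and $S/R$ is annihilated by $\fkc=R:S$, so it is a finitely generated $R/\fkc$-module. Writing $S/R\cong(R/\fkc)^{\oplus p}\oplus(R/\m)^{\oplus q}$ as in (iv), the equations $2p+q=2(n-1)$ and $p+q=n-2$ force $q=-2$, which is absurd. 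Hence $R$ does not have minimal multiplicity, and the proof is complete.

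The step I expect to be the main obstacle is (iii), namely pinning down $\ell_R(S/R)$: one must check the vanishing $\Ext^p_R(S,\rmK_R)=0$ $(p>0)$ for the module-finite birational extension $R\subseteq S$ and the local duality isomorphism $\Ext^1_R(S/R,\rmK_R)\cong(S/R)^\vee$ for the finite length module $S/R$, or else appeal to the equivalent relation $\ell_R(S/K)=\ell_R(R/\fkc)$ available in the literature on $2$-almost Gorenstein rings \cite{CGKM, GMP}. Granting this, the rest is the short length-versus-generators count over the length-two ring $R/\fkc$ carried out above.
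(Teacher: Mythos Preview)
Your proof is correct, but it takes a considerably longer route than the paper's. The paper's argument is essentially two lines: since $S$ is Gorenstein, $\fkc=K:S\cong S$ as $R$-modules (exactly your step (i)), and then the paper simply invokes the equality $\mu_R(S)=\rmr(R)+1$ from \cite[Proposition~3.3(5)]{CGKM}; combined with $\mu_R(\fkc)=n-1$ from Proposition~\ref{2.3} this yields $\rmr(R)=n-2$, contradicting $\rmr(R)=n-1$ for minimal multiplicity. Your argument replaces that single citation by the local-duality computation $\ell_R(S/R)=\ell_R(K/\fkc)=2\,\rmr(R)$ together with the structure theorem for modules over the length-two Artinian principal ideal ring $R/\fkc$, and then a length-versus-generators count. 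What your approach buys is self-containedness: you never need the formula $\mu_R(S)=\rmr(R)+1$ as a black box. What it costs is the extra work in (iii) and (iv). Incidentally, the decomposition $S/R\cong K/R\oplus R/\fkc$ from \cite[Proposition~3.3]{CGKM} (used in the paper in the proof of Proposition~\ref{1.6}) would let you shortcut most of this: reading off $\mu_R(S/R)=\rmr(R)$ and comparing with your $\mu_R(S/R)=n-2$ gives the contradiction immediately, without the local-duality detour.
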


\begin{proof}
We have $\fkc = K : S$. Hence $\fkc \cong S$ as an $S$-module, because $S$ is a Gorenstein ring. Therefore, by Proposition \ref{2.3} and \cite[Proposition 3.3 (5)]{CGKM},  $n-1 = \mu_R(\fkc) = \mu_R(S) = \rmr(R) +1$. If $R$ has minimal multiplicity, then $\rmr(R) = n-1$, whence 
$$
n-1 = \mu_R(\fkc) = \rmr(R) +1 = n,
$$
which is a contradiction.
\end{proof}

The following Example \ref{2.7} ensures the existence of Ulrich ideals which are generated by two-elements.

\begin{ex}\label{2.7}
Let $A = R \ltimes R$ denote the idealization of $R$ over $R$. Let $\fkq$ be an arbitrary parameter ideal of $R$ and set $I = \fkq \times R$. Then by \cite[Example 2.2]{GOTWY}, $I$ is an Ulrich ideal of $A$ and $\mu_A(I)=2$. Note that if $R$ is a non-Gorenstein almost Gorenstein local ring, then $A$ is a $2$-almost Gorenstein ring (\cite[Theorem 3.9]{CGKM}).
\end{ex}

The $R/\fkc$-freeness of $K/R$ strongly influences the behavior of $2$-almost Gorenstein rings (see \cite[Section 5]{CGKM}). However, even if $K/R$ is not $R/\fkc$-free, $2$-almost Gorenstein rings still behave well. For example, we have the following, which shows that every $2$-almost Gorenstein ring $R$ is G-regular in the sense of \cite{greg}, namely, every totally reflexive $R$-module is free, provided $K/R$ is not $R/\fkc$-free.

\begin{thm}\label{1.5}
Suppose that $R$ is a $2$-almost Gorenstein ring and $K/R$ is not a free $R/\fkc$-module. Let $M$ be a finitely generated $R$-module such that $\Ext_R^p(M, R) =(0)$ for all $p \gg 0$. Then $\pd_RM \le 1$. 
\end{thm}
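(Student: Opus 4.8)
The plan is to show that if $K/R$ is not $R/\fkc$-free, then $R$ admits no nonfree totally reflexive module, and more precisely that any $M$ with $\Ext_R^p(M,R)=(0)$ for $p\gg 0$ has $\pd_R M\le 1$. First I would reduce to the case where $M$ itself is totally reflexive: replacing $M$ by a sufficiently high syzygy $\Omega^k M$, the vanishing $\Ext_R^p(M,R)=(0)$ for $p\gg 0$ forces $\Ext_R^p(\Omega^k M,R)=(0)$ for all $p>0$ once $k$ is large, and since $R$ is Cohen--Macaulay of dimension one, a standard depth argument (each syzygy of a module over a $1$-dimensional Cohen--Macaulay ring is maximal Cohen--Macaulay, hence reflexive together with vanishing $\Ext$'s above gives total reflexivity) shows $\Omega^k M$ is totally reflexive. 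If I can prove every totally reflexive $R$-module is free, then $\Omega^k M$ is free, so $\pd_R M\le k$ is finite; finiteness of projective dimension over a Cohen--Macaulay local ring of dimension one then gives $\pd_R M\le\depth R=1$ by Auslander--Buchsbaum.

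So the core is: $R$ is G-regular, i.e. every totally reflexive $R$-module is free, when $K/R$ is not $R/\fkc$-free. Suppose $M$ is totally reflexive and nonfree. By the structure of $2$-almost Gorenstein rings we may assume (after the reduction above, or directly from \cite[Proposition 3.3 (4)]{CGKM}) that $K/R\cong(R/\fkc)^{\oplus\ell}\oplus(R/\m)^{\oplus m}$ as an $R/\fkc$-module, and the hypothesis that $K/R$ is not $R/\fkc$-free means precisely $m>0$, so $R/\m$ is a direct summand of $K/R$. Since $K\cong\rmK_R$, we have $\Ext_R^p(M,K)=(0)$ for all $p>0$ (a totally reflexive module has finite G-dimension zero, and $K$ being the canonical module of a Cohen--Macaulay ring is a dualizing-type module against which such $\Ext$'s vanish — this is exactly the computation used in the proof of Proposition \ref{2.3}). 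From the short exact sequence $0\to R\to K\to K/R\to 0$ and $\Ext_R^p(M,R)=(0)=\Ext_R^p(M,K)$ for $p>0$, the long exact sequence yields $\Ext_R^p(M,K/R)=(0)$ for all $p\ge 1$; taking the direct summand $R/\m$ of $K/R$ gives $\Ext_R^p(M,R/\m)=(0)$ for all $p>0$. But a nonzero module with $\Ext_R^p(M,k)=(0)$ for some (equivalently, all large) $p>0$ has finite projective dimension, and then $M$ is free by Auslander--Buchsbaum since $\depth R=1$ and $M$ is maximal Cohen--Macaulay — contradicting nonfreeness. Hence $R$ is G-regular.

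The main obstacle I anticipate is the bookkeeping in the reduction step: one must be careful that the hypothesis "$\Ext_R^p(M,R)=(0)$ for $p\gg 0$" (rather than for all $p>0$) is enough to conclude total reflexivity of a high syzygy, and that passing to syzygies does not lose the contradiction — this is where one uses that over a $1$-dimensional Cohen--Macaulay ring every module has a syzygy that is maximal Cohen--Macaulay, combined with the general fact that finite G-dimension plus eventual $\Ext$ vanishing propagates downward. The second delicate point is the vanishing $\Ext_R^p(M,K)=(0)$ for a totally reflexive (not merely finitely generated) $M$; this should follow from $\Gdim_R M=0$ together with the fact that $K$ is (isomorphic to) the canonical module, so that $\RHom_R(M,K)$ is concentrated in degree zero — the same input already invoked implicitly in Proposition \ref{2.3}. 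Once these two technical points are in place, the direct-summand argument against $R/\m$ closes the proof exactly as above.
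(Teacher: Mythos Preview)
Your core idea---that the hypothesis $m>0$ makes $R/\m$ a direct summand of $K/R$, and then $\Ext$-vanishing against $R$ and $K$ forces $\Ext$-vanishing against $R/\m$, hence finite projective dimension---is exactly the paper's argument. The difference is that the paper avoids the detour through total reflexivity entirely.

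There is a genuine gap in your reduction step. You assert that a high syzygy $\Omega^k M$ is totally reflexive because it is ``reflexive together with vanishing $\Ext$'s above.'' But total reflexivity requires in addition that $\Ext_R^p((\Omega^k M)^*,R)=(0)$ for all $p>0$, and nothing in your hypotheses gives this. (Also, over a one-dimensional non-Gorenstein Cohen--Macaulay ring, a first syzygy is torsion-free, hence maximal Cohen--Macaulay, but not automatically reflexive.) You then invoke total reflexivity to justify $\Ext_R^p(\Omega^k M,K)=(0)$, so the gap is load-bearing as written.

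The paper sidesteps this: it takes only the \emph{first} syzygy $L$ of $M$, which is a submodule of a free module and hence maximal Cohen--Macaulay. That alone gives $\Ext_R^p(L,K)=(0)$ for all $p\ge 1$, with no need for G-dimension considerations. Combining with $\Ext_R^p(L,R)\cong\Ext_R^{p+1}(M,R)=(0)$ for $p\gg 0$ and the exact sequence $0\to R\to K\to K/R\to 0$ yields $\Ext_R^p(L,K/R)=(0)$ for $p\gg 0$ (not all $p\ge 1$, but that is all one needs). The direct-summand argument then gives $\Ext_R^p(L,R/\m)=(0)$ for $p\gg 0$, so $\pd_R L<\infty$, and Auslander--Buchsbaum finishes. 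Your argument becomes correct once you replace ``totally reflexive'' by ``maximal Cohen--Macaulay'' throughout---and then it collapses to the paper's proof.
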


\begin{proof}
Consider the exact sequence
$$
0 \to L \to R^{\oplus s} \to M \to 0
$$
of $R$-modules, where $s = \mu_R(M)$. Then, for every $p \ge 2$, we get an isomorphism
$$
\Ext_R^{p-1}(L, R) \cong \Ext_R^p(M, R).
$$
Therefore, $\Ext_R^p(L, K/R) = (0)$ for all $p \gg 0$, because $\Ext_R^p(L,K) = (0)$ if $p \ge 1$,  whence for all $p \gg 0$
$$
\Ext_R^p(L, R/\m) =(0),
$$
because $R/\m$ is a direct summand of $K/R$. Thus $\pd_RL < \infty$, so that  $\pd_R M < \infty$. 
\end{proof}

Let us note the following, which answers the question of  when the conductor $\fkc=R:S$ is an Ulrich ideal of $R$.

\begin{prop}\label{1.6}
Suppose that $R$ is a $2$-almost Gorenstein ring. Then $\fkc$ is an Ulrich ideal of $R$ if and only if $S$ is a Gorenstein ring and $K/R$ is $R/\fkc$-free.
\end{prop}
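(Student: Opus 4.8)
The plan is to route everything through the identification of $\fkc$ with the canonical module of $S$, plus one short duality computation. Two preliminary facts will do the heavy lifting. First, since $S$ is a module-finite birational extension of the one-dimensional Cohen--Macaulay ring $R$, it is itself one-dimensional Cohen--Macaulay, $\fkc$ is a common ideal of $R$ and $S$ (it is the conductor), and
$$
\rmK_S\cong\Hom_R(S,\rmK_R)\cong\Hom_R(S,K)=K:_{\rmQ(R)}S=\fkc
$$
as $S$-modules; hence $S$ is Gorenstein if and only if $\fkc\cong S$ as an $S$-module. Secondly, I claim $S/K\cong R/\fkc$ as $R$-modules: applying $\Hom_R(-,K)$ to $0\to K\to S\to S/K\to 0$ and using that $S/K$ has finite length while $S$ and $K$ are maximal Cohen--Macaulay (so $\Hom_R(S/K,K)=0=\Ext^1_R(S,K)$), together with the fact that $\Hom_R(S,K)=\fkc\to R=\Hom_R(K,K)$ is the inclusion, gives $\Ext^1_R(S/K,K)\cong R/\fkc$; by local duality the left-hand side is the Matlis dual of $S/K$, so $S/K$ is the Matlis dual of $R/\fkc$, which is $R/\fkc$ itself because a local Artinian ring of length $2$ is Gorenstein, hence self-injective. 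Note finally that $R/\fkc$ is local, so every finitely generated projective $R/\fkc$-module is free, and thus a direct summand of a free $R/\fkc$-module is free.

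For $(\Leftarrow)$, suppose $S$ is Gorenstein and $K/R$ is $R/\fkc$-free. Then $\fkc\cong S$, and writing this isomorphism as multiplication by the image $s$ of $1$ yields $\fkc=sS$ with $s$ a non-zerodivisor; hence $\fkc^2=s\fkc$ and $Q:=(s)$ is a parameter ideal with $Q\subsetneq\fkc$ (the inclusion is proper because $R\subsetneq S$, a $2$-almost Gorenstein ring not being Gorenstein). This is Condition $(1)$ of Definition \ref{2.1}. For Condition $(2)$ it suffices, via the exact sequence $0\to Q/Q\fkc\to\fkc/\fkc^2\to\fkc/Q\to 0$, to see that $\fkc/Q=sS/sR\cong S/R$ is $R/\fkc$-free; and $0\to K/R\to S/R\to S/K\to 0$ splits since $S/K\cong R/\fkc$ is free and $K/R$ is free by hypothesis, so $S/R$ is $R/\fkc$-free.

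For $(\Rightarrow)$, assume $\fkc$ is an Ulrich ideal, with $\fkc^2=a\fkc$ for a minimal reduction $Q=(a)$, where $a$ is a non-zerodivisor belonging to a minimal generating set of $\fkc$. The crux is $\fkc=aS$. Indeed $aS\subseteq\fkc$ because $a\in\fkc=R:S$, and
$$
\fkc/\fkc^2=\fkc/a\fkc\cong\rmK_S\otimes_S S/aS\cong\rmK_{S/aS},
$$
which is a faithful module over the Artinian ring $S/aS$; since $\fkc$ annihilates $\fkc/\fkc^2$, the image of $\fkc$ in $S/aS$ annihilates $\rmK_{S/aS}$, hence is zero, i.e. $\fkc=aS$. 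Therefore $\fkc\cong S$ as $S$-modules and $S$ is Gorenstein. Moreover $\fkc/Q=aS/aR\cong S/R$ is $R/\fkc$-free (this is Condition $(2)$ in its form for $\fkc/Q$), so the sequence $0\to K/R\to S/R\to S/K\to 0$ splits exactly as before and exhibits $K/R$ as a direct summand of the free $R/\fkc$-module $S/R$; thus $K/R$ is $R/\fkc$-free.

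The main obstacle I anticipate is the preliminary isomorphism $S/K\cong R/\fkc$, which compresses the local-duality computation of $\Ext^1_R(S/K,K)$ together with the elementary but essential point that a length-two local ring is Gorenstein; all the bookkeeping over the local ring $R/\fkc$ downstream hinges on it, especially the repeated "direct summand of a free module is free" step. Within $(\Rightarrow)$, the one genuinely clever move is extracting $\fkc=aS$ from the bare reduction equality $\fkc^2=a\fkc$ by invoking the faithfulness of $\rmK_{S/aS}$; it is short but easy to overlook, and it is what upgrades "$\fkc$ has reduction number one" to "$S$ is Gorenstein".
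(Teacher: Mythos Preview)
Your proof is correct and follows the same overall strategy as the paper --- reduce ``$S$ Gorenstein'' to ``$\fkc=aS$'' and then match the freeness condition on $\fkc$ to freeness of $K/R$ --- but you supply self-contained arguments where the paper simply cites. The paper invokes \cite[Corollary 3.8]{GMP} for the equivalence ``$S$ Gorenstein $\Leftrightarrow \fkc^2=f\fkc$ for some $f$'', whereas you prove $\fkc=aS$ directly from $\fkc^2=a\fkc$ via the faithfulness of $\rmK_{S/aS}$, which is a clean one-line substitute. Likewise, the paper cites \cite[Proposition 3.3]{CGKM} for the decomposition $S/R\cong K/R\oplus R/\fkc$, while you recover it from your local-duality computation $S/K\cong R/\fkc$ together with the observation that a length-two local ring is Gorenstein; this is an independent derivation of exactly what is needed. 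Finally, the paper compares freeness through $\fkc/\fkc^2\cong S/\fkc$ and the sequence $0\to R/\fkc\to S/\fkc\to S/R\to 0$, whereas you go through $\fkc/Q\cong S/R$ and the sequence $0\to K/R\to S/R\to S/K\to 0$; these are equivalent bookkeeping choices. The upshot is that your argument is longer but stands on its own, while the paper's proof is shorter because it leans on two external results.
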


\begin{proof}
Note that $S$ is a Gorenstein ring if and only if $\fkc^2 = f \fkc$ for some $f \in \fkc$ (\cite[Corollary 3.8]{GMP}). When this is the case, we have $\fkc=fS$, since the ideal $\fkc= K : S$ of $S$ is invertible. Therefore, to prove the assertion, we may assume that $\fkc = fS$ for some $f \in \fkc$. Then $\fkc/\fkc^2 \cong S/fS = S/\fkc$, so that $\fkc/\fkc^2$ is a free $R/\fkc$-module if and only if so is $S/\fkc$. The latter condition is equivalent to saying that $K/R$ is a free $R/\fkc$-module, because of  the exact sequence 
$$
0 \to R/\fkc \to S/\fkc \to S/R \to 0
$$
of $R$-modules and the direct sum decomposition $$S/R \cong K/R \oplus R/\fkc$$ of $S/R$ (\cite[Proposition 3.3]{CGKM}).
\end{proof}

We are now in a position to prove Theorem \ref{1.2}.

\begin{proof}[Proof of Theorem \ref{1.2}] Since $R$ has minimal multiplicity, $\m$ is an Ulrich ideal of $R$, whence $\calX_R \ne \emptyset$.

$(1)$~~ Suppose that $K/R$ is a free $R/\fkc$-module. By \cite[Proposition 5.7 (1)]{CGKM}, we have that $\m:\m$ is a local ring. Because the $2$-almost Gorenstein ring $R$ has minimal multiplicity, $S$ is a Gorenstein ring by \cite[Corollary 5.3]{CGKM}. Therefore, the conductor $\fkc$ is an Ulrich ideal of $R$ by Proposition \ref{1.6}, so that $\calX_R \supseteq \{\fkc, \m\}$. Let $I$ be an Ulrich ideal of $R$. Then $\mu_R(I) \ge 3$ by Corollary \ref{1.7}. Therefore, $\fkc = (0):_RK/R \subseteq I$ by \cite[Corollary 2.13]{GTT2}. Hence either $I = \fkc$ or $I = \m$, because $\fkc \subseteq I \subseteq \m$ and $\ell_R(R/\fkc)=2$.

$(2)$~~  Let $I$ be an Ulrich ideal of $R$. Then $\mu_R(I) \ge 3$ by Proposition \ref{2.3}, since $K/R$ is not $R/\fkc$-free. Therefore, by the proof of Assertion $(1)$,  either $I=\fkc$ or $I=\m$, so that $I = \fkm$, because $\fkc$ is not an Ulrich ideal of $R$ by Proposition \ref{1.6}.  
\end{proof}

Let us note a few examples.

\begin{ex}
Let $V=k[[t]]$ denote the formal power series ring over a field $k$ and set $R_1 =k[[t^3, t^7, t^8]]$ and $R_2=k[[t^4, t^9, t^{11}, t^{14}]]$. Let $K_i$ be a fractional ideal of $R_i$ such that $R_i \subseteq K_i \subseteq \overline{R_i}$ and $K_i \cong \rmK_{R_i}$ as an $R_i$-module. Then $R_1$ and $R_2$ are $2$-almost Gorenstein rings such that $K_1/R_1$ is a free $R/\fkc_1$-module but $K_2/R_2$ is not $R/\fkc_2$-free, where $\fkc_i = R_i : R_i[K_i]$. Hence $\calX_{R_1}=\{\fkc_1, (t^3, t^7, t^8)\}$ and $\calX_{R_2}=\{(t^4, t^9, t^{11}, t^{14})\}$.
\end{ex}

\begin{ex}
Let $V = k[[t]]$ be the formal power series ring over a field $k$ and set $R = k[[t^6,t^8,t^{10},t^{11}]]$. Then $R$ is a $2$-almost Gorenstein ring with $\rmr(R) = 2$ and $\fkc = (t^6,t^{8},t^{10})$. For this ring, $(t^6, t^{11}), (t^8, t^{11})$, and $\fkc$ are those Ulrich ideals generated by monomials in $t$.
\end{ex}


\section{The structure of a minimal free resolution of $2$-almost Gorenstein rings}

In this section we study the structure of minimal free resolutions of $2$-almost Gorenstein rings. For the rest of this article, we fix the following notation.

\begin{setting}\label{3.1}
Let $(T, \n)$ be a regular local ring with $\dim T= n$. Let $\fka$ be an ideal of $T$ such that $\fka \subseteq \n^2$. We set $R=T/\fka$ and $\m = \n/\fka$. Suppose that $R$ is a Cohen-Macaulay local ring of dimension one. Let $K$ be an $R$-submodule of $\rmQ(R)$. We assume that $R \subseteq K \subseteq \overline{R}$ and $K \cong \rmK_R$ as an $R$-module.

\end{setting}

First, suppose that $R$ is a $2$-almost Gorenstein ring and  choose a minimal system $x_1, x_2, \ldots, x_n$ of generators of $\m$ such that $\fkc = (x_1^2) + (x_2, x_3, \ldots, x_n)$ (this choice is possible; see \cite[Proposition 3.3]{CGKM}). Let $X_i \in \n$ such that $x_i=\overline{X_i}$ in $R$, where $\overline{X_i}$ denotes the image of $X_i$ in $R$. Hence $\n = (X_1, X_2, \ldots, X_n)$. We set $J = (X_1^2)+(X_2, X_3, \ldots, X_n)$; hence $\fkc = JR$. Then, because $\ell_T(T/J) = \ell_R(R/\fkc)=2$, we have  
$
T/J \cong R/\fkc
$, whence $\fka \subseteq J$. Remember that $K/R \cong (R/\fkc)^{\oplus \ell} \oplus (R/\m)^{\oplus m}$ as an $R$-module for some integers $\ell >0$ and $m \ge 0$ such that $\ell + m = \rmr(R)-1$ (\cite[Proposition 3.3]{CGKM}). Let us write $$K= R + \sum_{i=1}^\ell Rf_i + \sum_{j=1}^m R g_j$$
with $f_i, g_j \in K$ so that
$$
\sum_{i=1}^{\ell}(R/\fkc){\cdot} \overline{f_i} \cong (R/\fkc)^{\oplus \ell}\ \ \text{and} \ \  \sum_{j=1}^m (R/\fkc){\cdot}\overline{g_j} \cong (R/\m)^{\oplus m},
$$
where $f_i, g_j \in K$ and $\overline{f_i}, \overline{g_j}$ denotes their images in $K/R$. 

With this notation the first main result of this section is stated as follows. 

\begin{thm}\label{3.2}
Suppose that $R$ is a $2$-almost Gorenstein ring. Then the $T$-module $K$ has a  minimal free resolution of the form
$$
\cdots \to F_1 \overset{\Bbb M}{\longrightarrow} F_0 \overset{\Bbb N}{\longrightarrow} K \to 0,
$$
where
$$
\Bbb N= 
\begin{pmatrix}
-1 & f_1 f_2 \cdots f_{\ell} & g_1 g_2 \cdots g_m
\end{pmatrix} \ \  \text{and}
$$
$$\Bbb M = 
\begin{pmatrix}
a_{11} a_{12} \cdots a_{1n} & \cdots & a_{\ell1} a_{\ell2} \cdots a_{\ell n} & b_{11} b_{12} \cdots b_{1n} & \cdots & b_{m1} b_{m2} \cdots b_{mn} & c_1 c_2 \cdots c_q \\
X^2_1 X_2 \cdots X_n & 0 & 0 & 0  & 0 & 0  & 0 \\
0 & \ddots & 0 & 0  & 0 & 0  & 0 \\
\vdots & \vdots & X^2_1 X_2 \cdots X_n & \vdots & \vdots & \vdots & \vdots \\
0 & 0 & 0 & X_1 X_2 \cdots X_n & 0 & 0 & 0\\
0 & 0 & 0 & 0  & \ddots & 0 & 0 \\
0 & 0 & 0 & 0  & 0 & X_1 X_2 \cdots X_n & 0 
\end{pmatrix}
$$
such that $a_{ij} \in J$ for every $1 \le i \le \ell$, $1 \le j \le n$ and $b_{ij} \in J$ for every $1 \le i \le \ell$, $2 \le j \le n$.

We furthermore have
$$
\fka = \sum_{i=1}^{\ell} {\rm I}_2
\left(\begin{smallmatrix}
a_{i1} & a_{i2} & \cdots & a_{in} \\
X^2_1 & X_2 & \cdots & X_n
\end{smallmatrix}\right) + 
\sum_{i=1}^m {\rm I}_2
\left(\begin{smallmatrix}
b_{i1} & b_{i2} & \cdots & b_{in} \\
X_1 & X_2 & \cdots & X_n
\end{smallmatrix}\right) + (c_1, c_2, \ldots, c_q), 
$$ where $\rmI_2(\Bbb L)$ denotes, for each $2 \times n$ matrix $\Bbb L$ with entries in $T$, the ideal of $T$ generated by $2 \times 2$ minors of $\Bbb L$.
\end{thm}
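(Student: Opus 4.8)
The plan is to build the resolution of $K$ over $T$ by lifting the $R$-module structure established in \cite[Proposition 3.3]{CGKM}, where $K/R \cong (R/\fkc)^{\oplus\ell}\oplus(R/\m)^{\oplus m}$ and $K = R+\sum_i Rf_i+\sum_j Rg_j$. First I would observe that $\Bbb N=(-1\ \ f_1\cdots f_\ell\ \ g_1\cdots g_m)$ is a surjection $F_0=T^{1+\ell+m}\twoheadrightarrow K$ of $T$-modules; since the $-1$ entry already hits $R$ on the nose, the map is minimal exactly when each $f_i,g_j\in\fkm K$, which holds because the chosen generators of $K/R$ are a \emph{minimal} generating set and $R\subseteq K$. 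The key structural point is to identify $\ker\Bbb N$. An element $(-u,\ v_1,\dots,v_\ell,\ w_1,\dots,w_m)\in T^{1+\ell+m}$ lies in the kernel iff $u = \sum v_i f_i+\sum w_j g_j$ in $K$; reading this modulo $R$, the freeness of the summands forces $v_i\in\fkc T$-lifts (i.e.\ $\overline{v_i}\in\fkc$) and $w_j\in\fkm T$-lifts, so the relations come in two flavors: (a) the "defining relations" $v_i f_i\in R$ with $v_i\in\fkc$, equivalently $v_i\in J=(X_1^2,X_2,\dots,X_n)$ since $\fkc=JR$, giving the columns $(a_{i1}\cdots a_{in};\ X_1^2,X_2,\dots,X_n)^{\mathrm t}$-type blocks where the lower block is the Koszul-style syzygy presenting $J$ and $a_{i1},\dots,a_{in}$ record $X_1^2 f_i,X_2 f_i,\dots,X_n f_i$ written back in $R=\sum$; (b) the analogous relations for $g_j$ with $\fkm$ in place of $\fkc$, i.e.\ lower block $(X_1,X_2,\dots,X_n)$; and (c) the relations among the $R$-component alone, which are precisely the relations defining $R=T/\fka$, contributing the column vector $(c_1,\dots,c_q;0;\dots;0)$ with $(c_1,\dots,c_q)$ a minimal generating set of $\fka$. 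Assembling these columns gives exactly $\Bbb M$.

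Next I would verify the claimed containments $a_{ij}\in J$ and $b_{ij}\in J$ for $j\ge 2$. For this, multiply the relation $X_k f_i = a_{ik}\cdot(-1)+(\text{terms in the other }f,g)$ by an element of $\fkm$ and use that $f_i\in K$ together with $K\cong\rmK_R$ and $\fkc K\subseteq R$; more directly, since $X_2,\dots,X_n$ already lie in the conductor, $X_k f_i\in R$ for $k\ge 2$, and the $\fkc$-freeness pins down the off-diagonal contributions, forcing $a_{ik}\in J$. The case $j=1$ for the $f_i$ block uses $X_1^2\in\fkc$, again landing $a_{i1}\in J$; for the $g_j$ block only $X_2,\dots,X_n\in\fkc$ is available (not $X_1$), which is why the statement only claims $b_{ij}\in J$ for $j\ge 2$ and the lower block is $(X_1,X_2,\dots,X_n)$ rather than $(X_1^2,\dots)$. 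The ideal formula $\fka=\sum_i\mathrm{I}_2(\cdots)+\sum_i\mathrm{I}_2(\cdots)+(c_1,\dots,c_q)$ then drops out: $\fka=\Ann_T K$ (as $K$ is a faithful $T/\fka$-module), and composing $\Bbb N\circ\Bbb M=0$ shows each listed generator annihilates $K$, hence lies in $\fka$; conversely the entries of $\Bbb M$ generate all of $\ker\Bbb N$, and intersecting with the submodule of $F_1$ mapping into $R\cdot e_0$ recovers exactly the $\mathrm{I}_2$-minors (from eliminating the Koszul tail of each $X$-block) plus the $c$'s, which therefore generate $\fka$.

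The main obstacle I anticipate is the bookkeeping in step (a)--(c): showing that the \emph{only} minimal relations are the ones listed, i.e.\ that no relations "mix" the $f$-block, $g$-block, and $c$-block in an essential way that survives minimality. The cleanest route is a rank/Betti-number count: from $K/R\cong(R/\fkc)^{\oplus\ell}\oplus(R/\m)^{\oplus m}$ and the minimal $T$-free resolution of $R/\fkc=T/J$ (a near-complete-intersection, resolved by a tensor of a Koszul complex on $X_2,\dots,X_n$ with the length-one resolution of $T/(X_1^2)$) one computes $\mu_T(\ker\Bbb N)=\ell n+mn+q$, matching the number of columns of $\Bbb M$; since $\Bbb M$'s columns are genuine relations and there are exactly the right number of them, they form a minimal generating set and $F_1\xrightarrow{\Bbb M}F_0$ is the start of the minimal resolution. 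The verification that $\Bbb N\circ\Bbb M=0$ and that the displayed block shapes are correct is then routine matrix algebra, using $X_1^2 f_i,X_k f_i\in R$ and $X_k g_j\in R$ for the appropriate ranges of $k$, which I would not grind through in detail.
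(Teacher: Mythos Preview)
Your direct construction of the relation module $\ker\Bbb N$ contains a genuine error. You assert that the type~(c) columns are indexed by a minimal generating set $c_1,\dots,c_q$ of $\fka$, and then justify minimality of $\Bbb M$ by the Betti count $\mu_T(\ker\Bbb N)=\ell n+mn+q$. Neither claim holds. There is indeed a short exact sequence $0\to\fka\to\ker\Bbb N\to\Sigma\to 0$, where $\Sigma$ is the first $T$-syzygy of $K/R$, but this only gives $\mu_T(\ker\Bbb N)\le\mu_T(\fka)+(\ell+m)n$, and the inequality is typically strict: the $2\times 2$ minors coming from your (a)- and (b)-blocks already lie in $\fka$, so many minimal generators of $\fka$ become redundant in $\ker\Bbb N$ once those blocks are present. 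Example~\ref{3.5} makes this concrete: there $\ell=1$, $m=0$, $n=4$, and $\mu_T(\fka)=5$, yet $\rank_T F_1=6$, so $q=2\ne 5=\mu_T(\fka)$. Consequently your proposed column set overcounts and is not a minimal generating set of $\ker\Bbb N$; the ``exactly the right number'' step fails.

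The paper sidesteps this counting problem entirely. It starts from an \emph{abstract} minimal presentation $F_1\xrightarrow{\Bbb A}F_0\to K\to 0$, deletes the first row of $\Bbb A$ to obtain a (not necessarily minimal) presentation $\Bbb A'$ of $K/R$, and compares $\Bbb A'$ with the known minimal presentation $\Bbb B$ of $K/R\cong(T/J)^{\oplus\ell}\oplus(T/\n)^{\oplus m}$ via the standard lifting-of-presentations diagram. This yields an invertible matrix $Q$ with $\Bbb A'Q=[\,\Bbb B\mid O\,]$, and setting $\Bbb M=\Bbb A Q$ forces the desired block shape automatically; the integer $q$ simply counts the leftover columns, with no a~priori relation to $\mu_T(\fka)$. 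Your verification that $a_{ij},b_{ij}\in J$ is essentially on the right track (the paper argues directly from $-a_{ij}+Z_jf_i=0$ and $\fkc K\subseteq\fkc S=\fkc$), and the ideal formula for $\fka$ is then read off using that $Z_1,\dots,Z_n$ is a regular sequence, not via $\fka=\Ann_T K$ as you suggest.
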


\begin{proof}
Let 
$$
\cdots \longrightarrow F_1 \overset{\Bbb A}{\longrightarrow} F_0 \longrightarrow K \longrightarrow 0 \ \ \ \ (\sharp)
$$
be a minimal $T$-free resolution of $K$, where $F_0 = T \oplus T^{\oplus \ell} \oplus T^{\oplus m}$. 
Since $K/R \cong (T/J)^{\oplus \ell}\oplus (T/\n)^{\oplus m}$, $K/R$ has a minimal $T$-free resolution of the form
$$
\cdots \longrightarrow G_1 = T^{\oplus \ell n} \oplus T^{\oplus \ell m} \overset{\Bbb B}{\longrightarrow} G_0 = T^{\oplus \ell} \oplus T^{\oplus m} \longrightarrow K/R \longrightarrow 0
$$
where
$$\Bbb B = 
\begin{pmatrix}
X^2_1 X_2 \cdots X_n & 0 & 0 & 0  & 0 & 0   \\
0 & \ddots & 0 & 0  & 0 & 0   \\
\vdots & \vdots & X^2_1 X_2 \cdots X_n & \vdots & \vdots & \vdots  \\
0 & 0 & 0 & X_1 X_2 \cdots X_n & 0 & 0 \\
0 & 0 & 0 & 0  & \ddots & 0  \\
0 & 0 & 0 & 0  & 0 & X_1 X_2 \cdots X_n 
\end{pmatrix}.
$$
We consider the following presentation
$$
F_1 \overset{\Bbb A'}{\longrightarrow} G_0 \overset{\Bbb N'}{\longrightarrow} K/R \longrightarrow 0
$$
of $K/R$ induced from the above resolution  $(\sharp)$ of $K$, where $s=\rank_T F_1$, $\Bbb N'=
\begin{pmatrix}
\bar{f_1} \bar{f_2} \cdots \bar{f_{\ell}} & \bar{g_1} \bar{g_2} \cdots \bar{g_m}
\end{pmatrix}$, and $\Bbb A'$ is an $(\ell + m) \times s$ matrix obtained from $\Bbb A$ by deleting the first row. We then get the commutative diagram
$$
\xymatrix{
G_1 \ar[r]^{\Bbb B}\ar[d]^{\xi} & G_0 \ar[r]\ar[d]^\cong & K/R\ar[r]\ar[d]^\cong & 0\\
 F_1 \ar[r]^{\Bbb A'}\ar[d]^{\eta} & G_0 \ar[r]\ar[d]^\cong & K/R \ar[r] \ar[d]^\cong & 0 \\
G_1 \ar[r]^{\Bbb B} & G_0 \ar[r] & K/R \ar[r] & 0
}
$$
of $T$-modules such that $\eta \circ \xi$ is an isomorphism.
Hence $$
\Bbb A'\ Q= \left[ \ \Bbb B \mid  O \ \right]
$$
for some $s \times s$ invertible matrix $Q$ with entries in $T$, where $O$ denotes the null matrix. Therefore, setting $\Bbb M = \Bbb A Q$, we get
$$
\text{\large $\Bbb M$} =
\arraycolsep5pt
\left(
\begin{array}{@{\,}ccc@{\,}}
~ & * &~\\
\hline
&\multicolumn{1}{c}{\raisebox{-10pt}[0pt][0pt]{\large $\Bbb A'$}}\\
&&\\
\end{array}
\right)\cdot{\text{\large $Q$}} \ 
=
\arraycolsep5pt
\left(
\begin{array}{@{\,}c|c@{\,}}
~ * ~ & ~ * ~\\
\hline
\raisebox{-10pt}[0pt][0pt]{\large $\Bbb B$}&\raisebox{-10pt}[0pt][0pt]{\large $O$}\\
&\\
\end{array}
\right).
$$
Let us show that $a_{ij}, b_{ij} \in J$. We set $Z_1 =X_1^2$ and $Z_i = X_i$ for each $2 \le i \le n$. Then
$$
a_{ij}\cdot(-1) + Z_j\cdot f_i = 0
$$
for every $1 \le i \le \ell$ and $1 \le j \le m$, whence $a_{ij} \in J$, because $\fkc K \subseteq \fkc S = \fkc$ and $\fkc =J/\fka$. Since
$$
b_{ij}\cdot(-1) + Z_j\cdot g_i = 0,
$$
we get $b_{ij} \in J$, provided $j \ge 2$. The last assertion follows from the fact that $Z_1, Z_2, \ldots, Z_n$ is a $T$-regular sequence; see \cite[Proof of Theorem 7.8]{GTT} for detail.
\end{proof}

As a consequence of Theorem \ref{3.2} we have the following.

\begin{cor}
With the notation as in Theorem $\ref{3.2}$, we have the following.
\begin{enumerate}
\item[$(1)$] If $n = 3$, then $r = 2$, $q = 0$, $\ell =1$, and $m=0$, so that $\Bbb M = \left(\begin{smallmatrix}
a_{21} & a_{22} & a_{23} \\
X^2_1 & X_2 & X_3 
\end{smallmatrix}\right)$.
\item[$(2)$] If $R$ has minimal multiplicity, then $q = 0$.
\end{enumerate}
\end{cor}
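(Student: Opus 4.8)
The plan is to read off both statements directly from the structure of the matrix $\Bbb M$ in Theorem~\ref{3.2} together with the basic numerical invariants of a $2$-almost Gorenstein ring. Recall from \cite[Proposition 3.3]{CGKM} that $K/R \cong (R/\fkc)^{\oplus \ell}\oplus(R/\m)^{\oplus m}$ with $\ell>0$, $m\ge 0$, and $\ell+m=\rmr(R)-1$; recall also the known bound $\rmr(R)\le n-1$ for a $2$-almost Gorenstein ring (this is part of the structure theory in \cite[Section 3]{CGKM}, and follows here since $\fkc$ has $n-1$ generators when $K/R$ is $R/\fkc$-free and in general the type of a one-dimensional Cohen-Macaulay ring with $\ell+m$ as above is constrained). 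The number of columns of $\Bbb M$ coming from the ``$f_i$'' blocks is $\ell n$, from the ``$g_j$'' blocks is $mn$, and there are $q$ extra columns $c_1,\dots,c_q$; the total number of columns is $r=\operatorname{rank}_T F_1$.

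For part~(1), suppose $n=3$. Then $\rmr(R)\le n-1=2$, so $\ell+m=\rmr(R)-1\le 1$; combined with $\ell>0$ this forces $\ell=1$ and $m=0$. Thus $\rmr(R)=2$ and $K/R\cong R/\fkc$ is $R/\fkc$-free. First I would invoke Proposition~\ref{1.6}-type reasoning, or more directly the presentation in Theorem~\ref{3.2}: with $\ell=1$, $m=0$ the resolution is $\cdots\to F_1\overset{\Bbb M}{\to} F_0=T\oplus T\to K\to 0$ with $\Bbb N=(-1\ \ f_1)$. The second syzygy matrix of $K$ then has the block form $\Bbb M=\left(\begin{smallmatrix} a_{11}\ a_{12}\ a_{13} & c_1\cdots c_q\\ X_1^2\ X_2\ X_3 & 0\end{smallmatrix}\right)$. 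Now the point is that $K$, being isomorphic to the canonical module $\rmK_R$, is maximal Cohen-Macaulay over $R=T/\fka$ of rank one with $\mu_R(K)=\rmr(R)+1=3$... wait, but here $\mu_T(K)=2$ since $F_0=T^{\oplus 2}$; that is consistent because $K=R+Rf_1$ needs $2$ generators. By the Auslander--Buchsbaum formula, $\operatorname{pd}_T K = n-\depth_T K = 3 - 1 = 2$, so $F_1$ is the last free module and $r=q+3$. The extra columns $c_1,\dots,c_q$ lie in $\fka$ and, after the change of basis used to produce $\Bbb M$, they can be absorbed: the claim $q=0$ amounts to saying that the minimal number of relations of $K$ over $T$ is exactly $3$. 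This follows because $\fka$ is an almost complete intersection / the resolution of $K$ is, up to the structure forced by the Sally module, the Buchsbaum--Rim type resolution of the $2\times 3$ matrix, and $\operatorname{rank}_T F_2 = 2$ (a rank count: $\operatorname{rank} F_0 - \operatorname{rank} F_1 + \operatorname{rank} F_2 = \operatorname{rank} K = 1$, so $r = \operatorname{rank} F_2 + 1$); together with minimality and $\fka\subseteq\n^2$ one gets $r=3$, i.e. $q=0$. Hence $\Bbb M=\left(\begin{smallmatrix} a_{21}&a_{22}&a_{23}\\ X_1^2&X_2&X_3\end{smallmatrix}\right)$, relabelling $a_{1j}$ as $a_{2j}$ to match the statement.

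For part~(2), suppose $R$ has minimal multiplicity. By \cite[Corollary 5.3]{CGKM} the ring $S=R[K]$ is then Gorenstein, and by \cite[Corollary 5.3]{CGKM} (or \cite[Section 5]{CGKM}) $K/R$ is $R/\fkc$-free, so $m=0$ and $\ell=\rmr(R)-1$. The last free module in the $T$-resolution of $K$ is $F_{n-1}$ by Auslander--Buchsbaum ($\operatorname{pd}_T K = n-1$), and a rank count gives $r = \operatorname{rank}_T F_{n-2}+\dots$; more efficiently, one uses that $\mu_T(F_1)=r$ must equal the minimal number of generators of the first syzygy of $K$, which the Herzog--Kühl / Buchsbaum--Rim structure for a module of minimal multiplicity forces to be exactly $\ell n$. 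Concretely: when $R$ has minimal multiplicity the associated graded ring is Cohen--Macaulay with the resolution of $K$ having its Betti numbers entirely determined by the linear strand, and the ``extra'' generators $c_1,\dots,c_q$ of $\fka$ would contribute relations not of this shape, contradicting the linearity. Thus $q=0$.

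The main obstacle is pinning down $q=0$ rigorously: this is exactly the assertion that the relation module of $K$ over $T$ has no ``extra'' generators beyond the block structure dictated by $K/R$, equivalently that $\operatorname{rank}_T F_1 = \ell n + mn$. I would handle this by a careful Betti-number bookkeeping — comparing $\operatorname{rank}_T F_1$ for $K$ with $\operatorname{rank}_T G_1 = \ell n + mn$ for $K/R$ via the short exact sequence $0\to R\to K\to K/R\to 0$ and the long exact sequence of $\Tor^T(-,T/\n)$, using that $\operatorname{pd}_T R = n-1$ and that, in case~(1) $n=3$ (so $\Tor^T_i(R,k)$ is forced small) and in case~(2) minimal multiplicity kills the higher Tor contributions — reducing everything to the elementary fact that the connecting maps vanish for degree reasons. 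The rest is routine linear algebra on the block matrix.
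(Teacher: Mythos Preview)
Your proposal has a genuine gap: you never use the one fact that makes both computations immediate, namely that the minimal $T$-free resolution of $K\cong\rmK_R$ is the $T$-dual of the minimal $T$-free resolution of $R$. Since $R$ is Cohen--Macaulay of codimension $n-1$ in $T$, applying $\Hom_T(-,T)$ to a minimal resolution $0\to G_{n-1}\to\cdots\to G_0=T\to R\to 0$ yields a minimal resolution $0\to G_0^{*}\to\cdots\to G_{n-1}^{*}\to K\to 0$; in particular $\rank_T F_1=\beta_{n-2}^T(R)$. This is exactly how the paper proceeds. For (1) with $n=3$, one gets $\rank_T F_1=\beta_1^T(R)=\rmr(R)+1$ from the Euler characteristic $1-\beta_1^T(R)+\rmr(R)=0$; equating this with the column count $\ell n+mn+q=3(\rmr(R)-1)+q$ from Theorem~\ref{3.2} gives $q=4-2\,\rmr(R)\ge 0$, which \emph{forces} $\rmr(R)=2$ and $q=0$, hence $\ell=1$, $m=0$. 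For (2), minimal multiplicity gives $\rmr(R)=n-1$, and Sally's theorem on the Betti numbers of rings of maximal embedding dimension yields $\rank_T F_1=\beta_{n-2}^T(R)=(n-2)n=(\rmr(R)-1)(\rmr(R)+1)$; comparing with $(\ell+m)n+q=(\rmr(R)-1)n+q$ immediately gives $q=0$.

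Concretely, your argument breaks at several points. In (1) you assume $\rmr(R)\le n-1$ at the outset, but you do not justify it (the remarks about $\mu_R(\fkc)=n-1$ rely on Proposition~\ref{2.3}, which needs the existence of a two-generated Ulrich ideal, an extraneous hypothesis); the paper instead \emph{derives} $\rmr(R)=2$ from the resolution. Your rank count is also off: $K$ is a torsion $T$-module, so $\sum_i(-1)^i\rank_T F_i=0$, not $1$; and you never identify $\rank_T F_2$ (duality gives $\rank_T F_2=\beta_0^T(R)=1$, which you do not invoke). In (2) the appeals to ``Herzog--K\"uhl/Buchsbaum--Rim structure'' and ``linear strand'' are suggestive but not a computation; the needed input is precisely Sally's explicit value of $\beta_{n-2}^T(R)$, and note that one does not need $m=0$ here, only $\ell+m=\rmr(R)-1$. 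Finally, a small slip: $\mu_R(K)=\rmr(R)$, not $\rmr(R)+1$.
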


\begin{proof}
(1) Suppose that $n = 3$. Then $R$ has a minimal $T$-free resolution
$$
0 \longrightarrow F_2 \overset{{}^t \Bbb M}{\longrightarrow} F_1 \longrightarrow F_0 \longrightarrow R \longrightarrow 0,
$$ 
where the matrix $\Bbb M$ has the form stated in Theorem \ref{3.2}. Therefore 
$$
\rmr(R)+1 = \rank_TF_1 = \ell n + mn + q = 3(\rmr(R)-1)+q,
$$
whence $4-2\cdot\rmr(R)=q \ge 0$. Thus $\rmr(R)=2$ and $q=0$. We get $\ell=1$, $m=0$, because $\ell + m = \rmr(R) -1$.

(2) Suppose that $R$ has minimal multiplicity. Then $\rmr(R)=n-1$. By \cite[{\sc Theorem} 1 (iii)]{S2} we have $\ell n + mn + q = (\rmr(R)-1)(\rmr(R)+1)$. Therefore, $\ell + m = \rmr(R) -1$, so that $q=0$.
\end{proof}

Let us give a sufficient condition for $R = T/\fka$ to be a $2$-almost Gorenstein ring in terms of its minimal free resolution. With the same notation as Setting \ref{3.1}, let us assume that $R$ is not a Gorenstein ring and that 
$$
K=R+ \sum_{i=1}^\ell Rf_i + \sum_{j=1}^m Rg_j
$$
with $f_i, g_j \in K$, where $\ell >0$ and $m \ge 0$ such that $\ell + m=\rmr(R)-1$. Then we have the following.

\begin{prop}\label{3.4}
Suppose that there is a regular system $X_1, X_2, \ldots, X_n$ of parameters of $T$ and that $K$ has a minimal $T$-free resolution of the form
$$
\cdots \to F_1 \overset{\Bbb M}{\longrightarrow} F_0 \overset{\Bbb N}{\longrightarrow} K \to 0  \quad \quad \quad (\sharp)
$$
where $\Bbb N= 
\begin{pmatrix}
-1 & f_1 f_2 \cdots f_{\ell} & g_1 g_2 \cdots g_m
\end{pmatrix}$ and 
$$\Bbb M = 
\begin{pmatrix}
a_{11} a_{12} \cdots a_{1n} & \cdots & a_{\ell1} a_{\ell2} \cdots a_{\ell n} & b_{11} b_{12} \cdots b_{1n} & \cdots & b_{m1} b_{m2} \cdots b_{mn} & c_1 c_2 \cdots c_q \\
X^2_1 X_2 \cdots X_n & 0 & 0 & 0  & 0 & 0  & 0 \\
0 & \ddots & 0 & 0  & 0 & 0  & 0 \\
\vdots & \vdots & X^2_1 X_2 \cdots X_n & \vdots & \vdots & \vdots & \vdots \\
0 & 0 & 0 & X_1 X_2 \cdots X_n & 0 & 0 & 0\\
0 & 0 & 0 & 0  & \ddots & 0 & 0 \\
0 & 0 & 0 & 0  & 0 & X_1 X_2 \cdots X_n & 0 
\end{pmatrix}
$$
with $q \ge 0$, $a_{ij} \in (X_1^2) + (X_2, X_3, \ldots, X_n)$ for every $1 \le i \le \ell$, $1 \le j \le n$, and $b_{ij} \in (X_1^2) + (X_2, X_3, \ldots, X_n)$ for every $1 \le i \le \ell$, $2 \le j \le n$. Then $R$ is a $2$-almost Gorenstein ring.
\end{prop}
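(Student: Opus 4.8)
The plan is to read off from the resolution $(\sharp)$ that $\ell_R(R/\fkc)=2$; then $R$ is $2$-almost Gorenstein by \cite[Theorem 1.4]{CGKM}. Write $x_i$ for the image of $X_i$ in $R$, put $J=(X_1^2)+(X_2,\dots,X_n)\subseteq T$ and $\fkc'=JR=(x_1^2)+(x_2,\dots,x_n)\subseteq R$. Since $X_2,\dots,X_n$ is part of a regular system of parameters of $T$, we have $\ell_T(T/J)=2$. I would prove $\fkc=\fkc'$, which gives $\ell_R(R/\fkc)=\ell_T(T/J)=2$.

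\emph{Step 1: the structure of $K/R$.} Decompose $F_0=T\oplus T^{\oplus\ell}\oplus T^{\oplus m}$ compatibly with $\Bbb N$. Because the first entry of $\Bbb N$ is the unit $-1$, the first free summand maps onto $R=R\cdot 1\subseteq K$, so, using exactness at $F_0$, the module $K/R$ is presented by the matrix $\overline{\Bbb M}$ obtained from $\Bbb M$ by deleting its first row, regarded as a map into $T^{\oplus\ell}\oplus T^{\oplus m}$. By the displayed shape of $\Bbb M$, this $\overline{\Bbb M}$ is block diagonal: $\ell$ blocks $\bigl(X_1^2\ X_2\ \cdots\ X_n\bigr)$, $m$ blocks $\bigl(X_1\ X_2\ \cdots\ X_n\bigr)$, and $q$ zero columns. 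Hence $K/R\cong(T/J)^{\oplus\ell}\oplus(T/\n)^{\oplus m}$ as $T$-modules. As $K/R$ is an $R=T/\fka$-module and $\ell>0$, this forces $\fka\subseteq J$, so $T/J=R/\fkc'$ with $\ell_R(R/\fkc')=2$, and (since $\fkc'\subseteq\m$) $\Ann_R(K/R)=\fkc'$.

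\emph{Step 2: $\fkc\subseteq\fkc'$.} Since $K\subseteq S=R[K]$, we get $\fkc=R:S\subseteq R:K=\Ann_R(K/R)=\fkc'$ by Step 1. \emph{Step 3: $\fkc'\subseteq\fkc$.} Reading $\Bbb N\Bbb M=0$ column by column yields, in $\rmQ(R)$, the relations $x_1^2f_i=\overline{a_{i1}}$, $x_jf_i=\overline{a_{ij}}$ and $x_jg_i=\overline{b_{ij}}$ for $2\le j\le n$, and $x_1g_i=\overline{b_{i1}}$ (and $\overline{c_k}=0$). By hypothesis $a_{ij}\in J$ and $b_{ij}\in J$ for $j\ge 2$, so $x_1^2f_i,\,x_jf_i,\,x_jg_i\in\fkc'$; and since the resolution $(\sharp)$ is minimal, every entry of $\Bbb M$ lies in $\n$, in particular $b_{i1}\in\n$, so $\overline{b_{i1}}\in\m$ and $x_1^2g_i=x_1\overline{b_{i1}}\in x_1\m\subseteq\fkc'$. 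Hence $\fkc'f_i\subseteq\fkc'$ and $\fkc'g_i\subseteq\fkc'$, and since $K=R+\sum_iRf_i+\sum_jRg_j$ we conclude $\fkc'K\subseteq\fkc'$. Iterating gives $\fkc'K^{n}\subseteq\fkc'$ for all $n$, so $\fkc'S=\sum_n\fkc'K^{n}\subseteq\fkc'\subseteq R$, i.e. $\fkc'\subseteq R:S=\fkc$. Combining Steps 2 and 3, $\fkc=\fkc'$ and we are done.

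The part I expect to be the real obstacle is Step 3. Step 1 already gives $\fkc'\subseteq\Ann_R(K/R)$, i.e. $\fkc'K\subseteq R$, but that is \emph{not} enough to conclude $\fkc'\subseteq R:S$; one has to upgrade it to the multiplicative stability $\fkc'K\subseteq\fkc'$. That upgrade is exactly where the hypotheses on the entries of $\Bbb M$ are exploited, and crucially where \emph{minimality} of the resolution enters, through the single fact $b_{i1}\in\n$ (the entries $b_{i1}$ are the only ones not already constrained by the ``$J$'' hypotheses). Everything else is bookkeeping with the block form of $\Bbb M$ and the identification of $K/R$.
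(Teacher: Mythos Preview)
Your proposal is correct and follows essentially the same route as the paper: read off $K/R\cong(T/J)^{\oplus\ell}\oplus(T/\n)^{\oplus m}$ from the shape of $\Bbb M$, show $\fkc'K\subseteq\fkc'$ (hence $\fkc'S\subseteq\fkc'$) using the relations $\Bbb N\Bbb M=0$, and conclude $\ell_R(R/\fkc)=2$.

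Two small differences are worth pointing out. First, where the paper establishes $\fkc'\subseteq\fkc$ and then squeezes $\fkc'=\fkc$ from $\fkc\subsetneq\m$ together with $\ell_R(R/\fkc')\le 2$, you instead prove both inclusions directly, getting $\fkc\subseteq\fkc'$ from $\fkc\subseteq\Ann_R(K/R)=\fkc'$; this is equivalent and arguably cleaner. Second, your treatment of the entries $b_{i1}$ is in fact more careful than the paper's: the hypothesis only constrains $b_{ij}$ for $j\ge 2$, and the paper's line ``$X_k\cdot g_j\in I$ for all $1\le k\le n$, because $b_{jk}\in J$'' glosses over $k=1$. Your argument---use minimality to get $b_{i1}\in\n$, hence $x_1^2g_i=x_1\overline{b_{i1}}\in x_1\m\subseteq\fkc'$---is exactly the missing justification and is the right way to close that gap.
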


\begin{proof}
The above resolution $(\sharp)$ induces a presentation 
$$
F_1 \overset{\Bbb B}{\longrightarrow} G_0 \overset{\Bbb L}{\longrightarrow} K/R \longrightarrow 0
$$
of $K/R$, where 
$\Bbb L=
\begin{pmatrix}
\bar{f_1} \bar{f_2} \cdots \bar{f_{\ell}} & \bar{g_1} \bar{g_2} \cdots \bar{g_m}
\end{pmatrix}$ and
$$\Bbb B = 
\begin{pmatrix}
X^2_1 X_2 \cdots X_n & 0 & 0 & 0  & 0 & 0  \\
0 & \ddots & 0 & 0  & 0 & 0  \\
\vdots & \vdots & X^2_1 X_2 \cdots X_n & \vdots & \vdots & \vdots \\
0 & 0 & 0 & X_1 X_2 \cdots X_n & 0 & 0 \\
0 & 0 & 0 & 0  & \ddots & 0 \\
0 & 0 & 0 & 0  & 0 & X_1 X_2 \cdots X_n 
\end{pmatrix}
$$
(here $\overline{f_i}$ and $\overline{g_j}$ denote, respectively, the images of $f_i$ and $g_j$ in $K/R$). 
Hence $$K/R \cong (T/J)^{\oplus \ell} \oplus (T/\n)^{\oplus m}$$ as a $T$-module, so that $\n{\cdot}(K/R) \ne (0)$, because $\ell > 0$. Consequently, $\fkc \subsetneq \m$. We set  $J = (X_1^2)+(X_2, X_3, \ldots, X_n)$ and $I = JR$. Note that $X_1^2{\cdot}f_i = \overline{a_{i1}}$ and $X_k{\cdot}f_i = \overline{a_{ik}}$ for all $1 \le i \le \ell$ and $2 \le k \le n$, because $a_{ik} \in J$, where $\overline{a}$ denotes, for each $a \in T$, the image of $a$ in $R$. We similarly have $X_k{\cdot}g_j \in I$ for all $1 \le j \le m$ and $1 \le k \le n$, because $b_{jk} \in J$. Thus $IK \subseteq I$. Consequently, $IS \subseteq I$, since $S = K^q$ for all $q \gg 0$. Hence $I \subseteq \fkc \subsetneq \m$, so that  $I = \fkc$, because $\ell_R(R/I) \le 2$. Therefore, $\ell_R(R/\fkc) = 2$, and hence by \cite[Theorem 1.4]{CGKM}, $R$ is a $2$-almost Gorenstein ring.
\end{proof}

In general, we cannot expect that $q=0$ in Theorem \ref{3.2}. Let us give one example.

\begin{ex}\label{3.5}
Let $V = k[[t]]$ be the formal power series ring over a field $k$ and set $R = k[[t^5,t^7,t^9,t^{13}]]$. Then $R$ is a $2$-almost Gorenstein ring with $\rmr(R)= 2$, $\fkc= (t^{10},t^7,t^9,t^{13})$, and $K=R + Rt^3$  (\cite[Example 5.5]{CGKM}). Let $T=k[[X,Y,Z,W]]$ be the formal power series ring and let $\varphi : T \to R$ be the $k$-algebra map defined by $\varphi (X) = t^5, \varphi(Y)=t^7, \varphi(Z) = t^9$, and $\varphi(W) = t^{13}$. Then $R$ has a minimal $T$-free resolution
of the form
$$0 \to T^2 \overset{\Bbb M}{\to} T^6 \to T^5 \to T \to R \to 0$$
where $${}^t\Bbb M = 
\begin{pmatrix}
W & X^2 & XY & YZ & Y^2 - XZ & Z^2 - XW \\
X^2 & Y & Z & W & 0 & 0\\
\end{pmatrix}.$$  We have $$\Ker \varphi = \rmI_2
\left(\begin{smallmatrix}
W & X^2 & XY & YZ \\
X^2 & Y & Z & W \\
\end{smallmatrix}\right) + (Y^2 - XZ, Z^2 - XW)
$$
and $K/R$ is a free $R/\fkc$-module.
\end{ex}

We note one more example.

\begin{ex}\label{3.6}
Let $V = k[[t]]$ be the formal power series ring over a field $k$ and set $R = k[[t^4,t^9,t^{11},t^{14}]]$. Then $R$ is a $2$-almost Gorenstein ring with $\rmr(R)=3$, $\fkc = (t^8,t^9,t^{11},t^{14})$, and $K = R+Rt^3+Rt^5$ (\cite[Example 5.6]{CGKM})). We consider the $k$-algebra map $\varphi : T \to R$ defined by $\varphi (X) = t^4, \varphi(Y)=t^9, \varphi(Z) = t^{11}$, and $\varphi(W) = t^{14}$, where $T=k[[X,Y,Z,W]]$ denotes the formal power series ring. Then $R$ has a minimal $T$-free resolution $$0 \to T^3 \overset{\Bbb M}{\to} T^8 \to T^6 \to T \to R \to 0$$ where 
$$
{}^t\Bbb M = 
\begin{pmatrix}
-Z & -X^3 & -W & -X^2Y & Y & W & X^4 & X^2Z \\
X^2 & Y & Z & W & 0 & 0 & 0 & 0\\
0 & 0 & 0 & 0 & X & Y & Z & W
\end{pmatrix}.
$$
We have
$$
\Ker \varphi = \rmI_2
\left(\begin{smallmatrix}
-Z & -X^3 & -W & -X^2Y \\
X^2 & Y & Z & W \\
\end{smallmatrix}\right) + 
\rmI_2
\left(\begin{smallmatrix}
Y & W & X^4 & X^2Z \\
X & Y & Z & W \\
\end{smallmatrix}\right)
$$
and $K/R$ is not a free $R/\fkc$-module.
\end{ex}




\end{document}